\documentclass[12pt,russian,a4paper]{article}

\usepackage{amsmath,amssymb,amsthm,xspace,amscd}
\usepackage{amsfonts,amsxtra,latexsym}
\usepackage[cp1251]{inputenc}
\usepackage[russian]{babel}
\usepackage{geometry}

\usepackage{comment}

\usepackage{geometry}
\geometry{a4paper,top=20mm,left=20mm,right=20mm,bottom=20mm}

\setcounter{secnumdepth}{3} \setcounter{tocdepth}{2}

\newtheorem{lemma}{Лемма}[section]
\newtheorem{theorem}{Теорема}[section]
\newtheorem{proposition}{Утверждение}[section]
\newtheorem{corollary}{Следствие}[section]

\newtheorem{remark}{Замечание}[section]

\topmargin -0.7cm \textwidth 15.5cm \textheight 24cm \oddsidemargin 0cm \evensidemargin 0cm \marginparwidth 0cm
\marginparsep 0cm \pagestyle{plain}

\begin{document}


\begin{center}
{\Large \bf Рост алгебр типа Темперли-Либа с ортогональностью и коммутацией, связанных с 
двухцветными на ребрах графами $K_{n,1}^{(i_1,j_1),\dots,(i_k,j_k)}$}
\end{center}

\begin{center}
 М.В.Заводовский, Ю.С.Самойленко
\end{center}

\section*{Введение}
Изучение структуры наборов ортопроекторов $\{P_k\}_{k=1}^n$, $P_k^2=P_k^*=P_k$ в гильбертовом пространстве $\mathcal H$, связанных различными алгебраическими соотношениями (ортогональность, коммутация, соотношения Темперли-Либа и т.д.) является важной математической задачей. Такие наборы ортопроекторов, как правило, являются представлениями в гильбертовом пространстве $*$-алгебр, порожденных самосопряженными идемпотентами и соответствующими соотношениями. При этом, информация о структуре алгебры полезна при изучении ее $*$-представлений. В частности, полезна информация о размерности или росте алгебр.

В ряде статей изучалась размерность или рост алгебр, заданных образующими и соотношениями, зависящих от графов (неориентированных, ориентированных, с вершиннымСтатьи pdfи или реберными раскрасками и т.д.) и, возможно, еще других параметров (см., например, \cite{gab}-\cite{vms}).

В п.3-4 настоящей статьи изучены размерность и рост (полиномиальный или экспоненциальный) класса алгебр $TL_\tau(K_{n,1}^{(i_1,j_1),\dots,(i_k,j_k)})$ (см. п.2), заданных проекторами и соотношениями, зависящими от двухцветных на ребрах графов или, что тоже самое, графов $K_{n,1}^{(i_1,j_1),\dots,(i_k,j_k)}$ со сплошными и пунктирными ребрами (см. п.1) и вещественного параметра $\tau$, $0<\tau<1$.

Отметим, что $*$-представления алгебры $TL_\tau(K_{n,1}^{(i_1,j_1),\dots,(i_k,j_k)})$ в гильбертовом пространстве задают конфигурации подпространств $S=(\mathcal H; \mathcal H_0,\mathcal H_1,\dots,\mathcal H_n)$ в $\mathcal H$ (см. \cite{ss}) такие, что косинус единственного угла между парами подпространств $\mathcal H_0$ и $\mathcal H_k$, $k=1,\dots,n$ равен $\tau$, а углы между парами подпространств $\mathcal H_k$ и $\mathcal H_j$, $k\neq j$, $k,j=1,\dots,n$ равны либо $90^o$, если вершины $k$ и $j$ не соединены пунктирным ребром и равны $0^o$ или $90^o$, если вершины соединены пунктирным ребром.При этом множество таких неприводимых конфигураций с точностью до унитарной эквивалентности конечно, если алгебра $TL_\tau(K_{n,1}^{(i_1,j_1),\dots,(i_k,j_k)})$ конечномерна и имеет ручную структуру, если алгебра $TL_\tau(K_{n,1}^{(i_1,j_1),\dots,(i_k,j_k)})$ полиномиального роста.

\section{Двухцветные на ребрах графы $K_{n,1}^{(i_1,j_1),\dots,(i_k,j_k)}$ и алгебры $TL_\tau(K_{n,1}^{(i_1,j_1),\dots,(i_k,j_k)})$}

\noindent {\bf 1.1} Двухцветный на ребрах граф $K_{n,1}^{(i_1,j_1),\dots,(i_k,j_k)}$ --- это неориентированная звезда $K_{n,1}$ у которой $n+1$ вершина $VK_{n,1}^{(i_1,j_1),\dots,(i_k,j_k)}=\{0,1,2,\dots,n\}$ и два типа ребер: сплошные ребра $EK_{n,1}^{(i_1,j_1),\dots,(i_k,j_k)}=\{(0,j)\}_{j=1}^n$ и пунктирные ребра $\widetilde E K_{n,1}^{(i_1,j_1),\dots,(i_k,j_k)}=$ $\{(i_1,j_1),\dots,(i_k,j_k)\}$, $i_1,j_1,\dots,$ $i_k,j_k\in$ $\{1,\dots,n\}$, $i_l\neq j_l$, $l=1,\dots,k$. Например, граф $K_{6,1}^{(1,2),(1,3),(1,5),(1,6),(2,6)}$:

\vspace{0.5cm} \setlength{\unitlength}{1mm}
\begin{picture}(80,20)(-10,20)
\linethickness{1pt} \thinlines

\put(35,30){\line(1,0){30}}
\put(50,30){\line(1,1){10}} 
\put(50,30){\line(-1,1){10}}


\put(50,30){\line(1,-1){10}} 
\put(50,30){\line(-1,-1){10}}

\put(35,30){\circle*{1}} 
\put(50,30){\circle*{1}} 
\put(65,30){\circle*{1}} 
\put(60,40){\circle*{1}} 
\put(40,40){\circle*{1}} 
\put(40,20){\circle*{1}} 
\put(60,20){\circle*{1}} 


\put(35,30){\line(1,2){2}} 
\put(40,40){\line(-1,-2){2}}

\put(40,40){\line(1,0){5}} 
\put(47,40){\line(1,0){5}}
\put(54,40){\line(1,0){6}}

\put(35,30){\line(1,-2){2}} 
\put(40,20){\line(-1,2){2}}

\put(35,30){\line(5,2){5}} 
\put(42,33){\line(5,2){5}}
\put(49,36){\line(5,2){5}}
\put(60,40){\line(-5,-2){4}}

\put(35,30){\line(5,-2){5}} 
\put(42,27){\line(5,-2){5}}
\put(49,24){\line(5,-2){5}}
\put(60,20){\line(-5,2){4}}

\put(40,20){\line(0,1){5}} 
\put(40,27){\line(0,1){5}}
\put(40,40){\line(0,-1){6}}

\put(60,40){\line(1,-2){2}} 
\put(65,30){\line(-1,2){2}}

\put(65,30){\line(-1,-2){2}} 
\put(60,20){\line(1,2){2}}

\put(35,27){\makebox(0,0)[a]{$_1$}}
\put(50,27){\makebox(0,0)[a]{$_0$}}
\put(65,27){\makebox(0,0)[a]{$_4$}}
\put(63,40){\makebox(0,0)[a]{$_3$}}
\put(37,40){\makebox(0,0)[a]{$_2$}}
\put(63,20){\makebox(0,0)[a]{$_5$}}
\put(37,20){\makebox(0,0)[a]{$_6$}}
\end{picture}
\vspace{0,5cm}

Граф $K_{m,1}^{(k_1,l_1),\dots,(k_s,l_s)}$ называется подграфом графа $K_{n,1}^{(i_1,j_1),\dots,(i_k,j_k)}$, если $m\leq n$, $s\leq k$, $VK_{m,1}^{(k_1,l_1),\dots,(k_s,l_s)}\subseteq$ $VK_{n,1}^{(i_1,j_1),\dots,(i_k,j_k)}$, $EK_{m,1}^{(k_1,l_1),\dots,(k_s,l_s)}\subseteq$ $EK_{n,1}^{(i_1,j_1),\dots,(i_k,j_k)}$ и \\ $\widetilde EK_{m,1}^{(k_1,l_1),\dots,(k_s,l_s)}\subseteq$ $\widetilde EK_{n,1}^{(i_1,j_1),\dots,(i_k,j_k)}$. 

Два графа $K_{n,1}^{(i_1,j_1),\dots,(i_k,j_k)}$ и $K_{n,1}^{(\tilde i_1,\tilde j_2),\dots,(\tilde i_k,\tilde j_k)}$ изоморфны, если существует перестановка вершин $\{1,\dots,n\}$ устанавливающая взаимнооднозначное соответствие между множеством ребер $\widetilde EK_{n,1}^{(i_1,j_1),\dots,(i_k,j_k)}$ и $\widetilde EK_{n,1}^{(\tilde i_1,\tilde j_2),\dots,(\tilde i_k,\tilde j_k)}$.

Подмножество вершин графа $\{1,\dots,n\}$ разобьем на связанные пунктирными ребрами компоненты $\Gamma_s$ ($s=1,\dots,m$), $k_s$ --- число вершин $s$-той компоненты $\Gamma_s$ ($k_1+\dots+k_m=n$). Обозначим через $\nu(K_{n,1}^{(i_1,j_1),\dots,(i_k,j_k)})$ количество $m$, таких связных по пунктирным ребрам компонент. Например, $\nu(K_{6,1}^{(1,2),(1,3),(1,5),(1,6),(2,6)})=1$, $\nu(K_{4,1}^{(1,2),(3,4)})=2$, $\nu(K_{6,1}^{(1,6),(2,3),(4,5)})=3$.

\noindent {\bf 1.2} Алгебра $TL_\tau(K_{n,1}^{(i_1,j_1),\dots,(i_k,j_k)})$ задается следующими образующими и соотношениями
$$TL_\tau(K_{n,1}^{(i_1,j_1),\dots,(i_k,j_k)})=\mathbb C\Bigl\langle p_0,p_1,\dots,p_n\mid
p_k^2=p_k,\; k=0,1,\dots, n; \; p_ip_0p_i=\tau^2 p_i, $$ $$ p_0p_ip_0=\tau^2 p_0,\;(0<\tau<1,\;i=1,\dots,n);\; p_ip_j=p_jp_i, \; (i,j)\in\{(i_1,j_1),\dots,(i_k,j_k)\};$$ $$ p_ip_j=p_jp_i=0,\; (i,j)\not\in\{(i_1,j_1),\dots,(i_k,j_k)\} \Bigr\rangle.$$

Имеет место следующее утверждение.

\begin{proposition}
Размерность или рост алгебры $TL_\tau(K_{n,1}^{(i_1,j_1),\dots,(i_k,j_k)})$ не зависят от параметра $\tau$.
\end{proposition}
\begin{proof}Рассмотрим граф $K_{n,1}$, в котором все не соединенные ребрами вершины связаны соотношением коммутации в алгебре $TL_\tau(K_{n,1})$. Покажем, что базис Гребнера алгебры $TL_\tau(K_{n,1})$ имеет конечное число элементов. Зададим на образующих алегбры $TL_\tau(K_{n,1})$ следующий порядок $p_1<p_2,\dots,<p_n$. Несложные вычисления дают, что базис Гребнера алегбры $TL_\tau(K_{n,1})$ состоит из элементов $p_i^2$ при $i=0,1,\dots,n$, $p_ip_0p_i-\tau p_i$ и $p_0p_ip_0-\tau p_0$ при $i=0,1,\dots,n$, $p_{i_1}p_{i_2}-p_{i_2}p_{i_1}$ при $i_1,i_2=1,\dots,n$ и $i_1>i_2$, $p_{i_1}p_{i_2}\dots p_{i_m}p_0p_{j_1}p_{j_2}\dots p_{j_s}p_{i_1}-\tau p_{j_1}p_{j_2}\dots p_{j_s}p_{i_2}p_{i_3}\dots p_{i_m}$ при $i_1=1,\dots,n-1$, $i_k=i_{k-1}+1,\dots,n$ $k=2,\dots,m$, $i_{k}<i_{k+1}$ $k=1,\dots,m$, $j_1=1,\dots,n-1$, $j_k=j_{k-1}+1,\dots,n$, $k=2,\dots,s$ $j_{k}<j_{k+1}$, $k=1,\dots,s$. Отсюда видно, что существует не более $n$ старших слов базиса Гребнера алгебры $TL_\tau(K_{n,1})$ максимальной длины и их длина не превышает $n+2$. Но тогда размерность или рост алгебры $TL_\tau(K_{n,1})$ не зависят от параметра $\tau$. Так как алгебра $TL_\tau(K_{n,1}^{(i_1,j_1),\dots,(i_k,j_k)})$ является фактор подалгеброй алгебры $TL_\tau(K_{n,1})$, то ее размерность или рост также не зависят от $\tau$.
\end{proof}

\noindent {\bf 1.3} При удалении вершины $i_0$ графа $K_{n,1}^{(i_1,j_1),\dots,(i_k,j_k)}$, $i_0\in$ $\{1,\dots,n\}$, не связанной с другими вершинами пунктирными ребрами, возникает граф $K_{n-1,1}^{(i_1,j_1),\dots,(i_k,j_k)}$ ($VK_{n-1,1}^{(i_1,j_1),\dots,(i_k,j_k)}=$ $VK_{n,1}^{(i_1,j_1),\dots,(i_k,j_k)}\setminus\{i_0\}$, $EK_{n-1,1}^{(i_1,j_1),\dots,(i_k,j_k)}=$ $EK_{n,1}^{(i_1,j_1),\dots,(i_k,j_k)}\setminus\{(0,i_0)\}$, $\widetilde EK_{n-1,1}^{(i_1,j_1),\dots,(i_k,j_k)}=$ $\widetilde EK_{n,1}^{(i_1,j_1),\dots,(i_k,j_k)}$). Покажем, что алгебры $TL_\tau(K_{n,1}^{(i_1,j_1),\dots,(i_k,j_k)})$ и $TL_\tau(K_{n-1,1}^{(i_1,j_1),\dots,(i_k,j_k)})$ имеют одинаковый рост.

\begin{proposition}
Алгебры $TL_\tau(K_{n,1}^{(i_1,j_1),\dots,(i_k,j_k)})$ и $TL_\tau(K_{n-1,1}^{(i_1,j_1),\dots,(i_k,j_k)})$ имеют одинаковый рост.
\end{proposition}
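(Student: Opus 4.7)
The plan is to construct an explicit basis of $A_n := TL_\tau(K_{n,1}^{(i_1,j_1),\dots,(i_k,j_k)})$ that expresses each basis element as a short ``decoration'' (by the letter $p_{i_0}$) of a basis element of $A_{n-1} := TL_\tau(K_{n-1,1}^{(i_1,j_1),\dots,(i_k,j_k)})$, and then to compare the counting functions of the two bases. Since $i_0$ lies on no dashed edge, the defining relations force $p_{i_0} p_j = p_j p_{i_0} = 0$ for every $j \in \{1,\dots,n\}\setminus\{i_0\}$, so in any reduced word the letter $p_{i_0}$ may be adjacent only to $p_0$. The relations $p_{i_0}^2 = p_{i_0}$, $p_0 p_{i_0} p_0 = \tau^2 p_0$ and $p_{i_0} p_0 p_{i_0} = \tau^2 p_{i_0}$ then forbid $p_{i_0}$ from the interior of any reduced word and limit it to at most one occurrence at each endpoint.

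With this observation, every reduced word of $A_n$ has one of the four shapes $w$, $p_{i_0} w$, $w p_{i_0}$, or $p_{i_0} w p_{i_0}$, where $w$ is a reduced word in the generators $\{p_j : j \in \{0,1,\dots,n\}\setminus\{i_0\}\}$ subject to the boundary constraint that $w$ starts (resp.\ ends) with $p_0$ whenever $p_{i_0}$ is attached on the left (resp.\ right), with the additional requirement $|w|\geq 2$ in the two-sided case. Moreover, since no reduction rule of $A_n$ coming from a relation not involving $p_{i_0}$ can introduce the letter $p_{i_0}$, the set of reduced words without $p_{i_0}$ inside $A_n$ coincides with the set of reduced words of $A_{n-1}$; this yields a canonical injective homomorphism $A_{n-1} \hookrightarrow A_n$.

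Denoting by $g_m(N)$ the dimension of the span of words of length at most $N$ in $A_m$, the four-shape decomposition gives at once $g_n(N) \leq 4\, g_{n-1}(N) + O(1)$, while injectivity of $A_{n-1}\hookrightarrow A_n$ yields $g_{n-1}(N) \leq g_n(N)$. Consequently $g_n$ and $g_{n-1}$ are equivalent as growth functions, and the two algebras have the same growth.

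The principal obstacle is the rigorous justification of the four-shape normal form, namely that the proposed list of words is not merely a spanning set but is actually linearly independent in $A_n$. This amounts to extending the Gr\"obner-type analysis from the proof of Proposition~1.1 with $p_{i_0}$ declared to be the largest generator, and then verifying that every overlap between the new reductions involving $p_{i_0}$ and the previous rules closes. Because $p_{i_0}$ interacts with the rest of the algebra solely through $p_0$, all such overlaps reduce via $p_0 p_{i_0} p_0 \to \tau^2 p_0$ and $p_{i_0} p_0 p_{i_0} \to \tau^2 p_{i_0}$ to a common normal form; carrying out this confluence check for the more intricate rules of Proposition~1.1 (in particular those of shape $p_{i_1}\cdots p_{i_m} p_0 p_{j_1}\cdots p_{j_s} p_{i_1}$) is the essential technical step.
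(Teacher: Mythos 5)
Your proposal follows essentially the same route as the paper: since the removed vertex $i_0$ carries no dashed edge, $p_{i_0}$ can meet only $p_0$, so reduced words of the larger algebra are just reduced words of the smaller one decorated at the ends by the boundedly many patterns involving $p_{i_0}$ (the paper records these as the extra Gr\"obner-basis elements $p_{i_0}p_0p_{i_0}$, $p_0p_{i_0}p_0$, $p_{i_0}p_j$, $p_jp_{i_0}$), whence the growth is unchanged. The confluence/linear-independence check you single out as the remaining technical step is likewise only asserted, not carried out, in the paper's own argument, so your write-up is correct at the same level of detail (and its counting estimate $g_n(N)\leq 4\,g_{n-1}(N)+O(1)$ is in fact stated more carefully).
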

\begin{proof}
Старшие слова базиса Гребнера алгебры $TL_\tau(K_{n,1}^{(i_1,j_1),\dots,(i_k,j_k)})$ содержат все старшие слова базиса Гребнера алгебры $TL_\tau(K_{n-1,1}^{(i_1,j_1),\dots,(i_k,j_k)})$ и дополнительно элементы $p_np_0p_n$, $p_0p_np_0$, $p_np_i$, $p_ip_n$ для всех $i=1,\dots,n$. Так как алгебра $TL_\tau(K_{n+1,1})$ (алгебра, в которой отсутствуют пунктирные ребра) конечномерна, то эти элементы порождают лишь конечное число элементов линейного базиса. Поэтому добавление к графу $K_{n-1,1}^{(i_1,j_1),\dots,(i_k,j_k)}$ новой вершины $i_n$ и ребра с соотношением ортогональности увеличивает линейный базис алгебры $TL_\tau(K_{n,1}^{(i_1,j_1),\dots,(i_k,j_k)})$ на конечное число элементов. 
\end{proof}

\begin{remark}
Из утверждения 1.2 следует, что для изучения роста алгебр $TL_\tau(K_{n,1}^{(i_1,j_1),\dots,(i_k,j_k)})$ достаточно рассматривать двухцветные графы, у которых каждая из вершин $\{1,\dots,n\}$ участвует в соотношениях коммутации. В дальнейшем мы всегда будем предполагать, что каждая из вершин $\{1,\dots,n\}$ связана с какими-то другими пунктирными ребрами. 
\end{remark}

\noindent {\bf 1.4}
Покажем, что удаление пунктирного ребра не увеличивает рост алгебры.
\begin{proposition}
Пусть  граф $K_{n,1}^{(r_1,s_1),\dots,(r_m,s_m)} \subset K_{n,1}^{(i_1,j_1),\dots,(i_k,j_k)}$, полученный из $K_{n,1}^{(i_1,j_1),\dots,(i_k,j_k)}$ удалением пунктирного ребра. Тогда рост алгебры \\ $TL_\tau(K_{n,1}^{(r_1,s_1),\dots,(r_m,s_m)})$ не превышает рост алгебры $TL_\tau(K_{n,1}^{(i_1,j_1),\dots,(i_k,j_k)})$.
\end{proposition}
\begin{proof}
Предположим, что граф $K_{n,1}^{(r_1,s_1),\dots,(r_m,s_m)} \subset K_{n,1}^{(i_1,j_1),\dots,(i_k,j_k)}$, полученный из $K_{n,1}^{(i_1,j_1),\dots,(i_k,j_k)}$ удалением пунктирного ребра $(i_0,j_0)$. Но тогда линейный базис алгебры $TL_\tau(K_{n,1}^{(r_1,s_1),\dots,(r_m,s_m)})$ содержатся в линейном базисе алгебры $TL_\tau(K_{n,1}^{(i_1,j_1),\dots,(i_k,j_k)})$. Следовательно, рост алгебры $TL_\tau(K_{n,1}^{(r_1,s_1),\dots,(r_m,s_m)})$ не превышает рост алгебры $TL_\tau(K_{n,1}^{(i_1,j_1),\dots,(i_k,j_k)})$.
\end{proof}

\begin{corollary}
Пусть  граф $K_{m,1}^{(i_1,j_1),\dots,(i_l,j_l)} \subset K_{n,1}^{(i_1,j_1),\dots,(i_k,j_k)}$, $m\leq n$, $l\leq k$. Тогда рост алгебры $TL_\tau(K_{m,1}^{(i_1,j_1),\dots,(i_l,j_l)})$ не превышает рост алгебры $TL_\tau(K_{n,1}^{(i_1,j_1),\dots,(i_k,j_k)})$.
\end{corollary}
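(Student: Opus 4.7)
The plan is to chain Propositions 1.2 and 1.3 via the intermediate graph $K_{n,1}^{(i_1,j_1),\dots,(i_l,j_l)}$, whose Temperley--Lieb algebra will serve as a bridge between the two sides of the desired inequality.

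First, I would delete, one at a time, each of the extra edges $(i_{l+1},j_{l+1}),\dots,(i_k,j_k)$ lying in $\widetilde EK_{n,1}^{(i_1,j_1),\dots,(i_k,j_k)}$ but not in $\widetilde EK_{m,1}^{(i_1,j_1),\dots,(i_l,j_l)}$. At each step Proposition 1.3 applies and gives that the growth rate can only decrease or stay the same when one extra edge is removed. After $k-l$ such removals I obtain that the growth rate of $TL_\tau(K_{n,1}^{(i_1,j_1),\dots,(i_l,j_l)})$ does not exceed that of $TL_\tau(K_{n,1}^{(i_1,j_1),\dots,(i_k,j_k)})$.

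Next, I would delete, one at a time, each of the surplus vertices in $VK_{n,1}^{(i_1,j_1),\dots,(i_l,j_l)} \setminus VK_{m,1}^{(i_1,j_1),\dots,(i_l,j_l)}$. The key observation, guaranteed by the subgraph hypothesis, is that all endpoints of the retained extra edges $(i_1,j_1),\dots,(i_l,j_l)$ already lie in $VK_{m,1}^{(i_1,j_1),\dots,(i_l,j_l)}$; consequently no surplus vertex is incident to any extra edge, so Proposition 1.2 is applicable at every stage and each removal preserves the growth rate. Concatenating the two chains of bounds gives the claim. I do not anticipate any substantive obstacle: the only point needing verification is that the hypotheses of Propositions 1.2 and 1.3 remain satisfied at every stage, and this is automatic in view of the observations above — in particular, the order in which the surplus vertices are removed is irrelevant, since they are disjoint from all retained extra edges.
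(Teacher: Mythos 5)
Your argument is correct and is essentially the paper's own proof: the authors likewise dispose of the corollary by combining Proposition 1.3 (removal of extra edges does not increase the growth type) with Proposition 1.2 (removal of a hanging vertex not incident to extra edges preserves it), only stating this in one sentence. Your version merely makes explicit the order of removals and the check that the surplus vertices are disjoint from the retained extra edges, which is a correct elaboration of the same chaining.
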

\begin{proof}
Из утверждений 1.2 и 1.3 следует, что удаление вершин и пунктирных ребер не увеличивает рост алгебры.
\end{proof}

\section{Примеры алгебр $TL_\tau(K_{n,1}^{(i_1,j_1),\dots,(i_k,j_k)})$, их рост}
В данном разделе мы приведем примеры таких графов, что связанные с ними алгебры $TL_\tau(K_{n,1}^{(i_1,j_1),\dots,(i_k,j_k)})$ конечномерны (п. 2.1), бесконечномерны полиномиального роста (п. 2.2) и экспоненциального роста (п. 2.3).

\noindent {\bf 2.1} Приведем примеры конечномерных алгебр $TL_\tau(K_{n,1}^{(i_1,j_1),\dots,(i_k,j_k)})$.

\noindent {\bf 2.1.1} Рассмотрим двухцветный на ребрах граф $K_{n,1}^{(1,2),(1,3),\dots,(1,n)}$:

\vspace{0.5cm} \setlength{\unitlength}{1mm}
\begin{picture}(80,20)(-10,20)
\linethickness{1pt} \thinlines

\put(35,30){\line(1,0){30}}
\put(50,30){\line(1,1){10}} 
\put(50,30){\line(-1,1){10}}

\put(50,20){\makebox(0,0)[a]{$\dots$}} 

\put(50,30){\line(1,-1){10}} 
\put(50,30){\line(-1,-1){10}}

\put(35,30){\circle*{1}} 
\put(50,30){\circle*{1}} 
\put(65,30){\circle*{1}} 
\put(60,40){\circle*{1}} 
\put(40,40){\circle*{1}} 
\put(40,20){\circle*{1}} 
\put(60,20){\circle*{1}} 

\put(35,30){\line(1,2){2}} 
\put(40,40){\line(-1,-2){2}}

\put(35,30){\line(1,-2){2}} 
\put(40,20){\line(-1,2){2}}

\put(35,30){\line(5,2){5}} 
\put(42,33){\line(5,2){5}}
\put(49,36){\line(5,2){5}}
\put(60,40){\line(-5,-2){4}}

\put(35,30){\line(5,-2){5}} 
\put(42,27){\line(5,-2){5}}
\put(49,24){\line(5,-2){5}}
\put(60,20){\line(-5,2){4}}

\put(35,30){\line(6,1){6}} 
\put(42,31){\line(1,0){4}}
\put(47.5,31){\line(1,0){5}}
\put(58,31){\line(-1,0){4}}
\put(65,30){\line(-6,1){6}}

\put(35,27){\makebox(0,0)[a]{$_1$}}
\put(50,27){\makebox(0,0)[a]{$_0$}}
\put(65,27){\makebox(0,0)[a]{$_4$}}
\put(63,40){\makebox(0,0)[a]{$_3$}}
\put(37,40){\makebox(0,0)[a]{$_2$}}
\put(63,20){\makebox(0,0)[a]{$_5$}}
\put(37,20){\makebox(0,0)[a]{$_n$}}
\end{picture}
\vspace{0,5cm}

\begin{proposition}
Алгебра $TL_\tau(K_{n,1}^{(1,2),(1,3),\dots,(1,n)})$ конечномерна.
\end{proposition}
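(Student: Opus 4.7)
My plan is to exhibit a continuous family of pairwise inequivalent irreducible $*$-representations of the algebra and then invoke the fact that a finite-dimensional associative algebra has only finitely many equivalence classes of irreducible representations. The first step is a reduction via Corollary 1.1: the full subgraph on the vertex subset $\{0,1,2\}$ is $K_{2,1}^{(1,2)}\subset K_{n,1}^{(1,2),(1,3),\dots,(1,n)}$, so $\dim TL_\tau(K_{2,1}^{(1,2)})\le\dim TL_\tau(K_{n,1}^{(1,2),(1,3),\dots,(1,n)})$, and it suffices to prove infinite-dimensionality of the ``triangle algebra'' $TL_\tau(K_{2,1}^{(1,2)})$ generated by three projections $p_0,p_1,p_2$ with $p_1 p_2=p_2 p_1$ together with the conditions $p_i p_0 p_i=\tau^2 p_i$, $p_0 p_i p_0=\tau^2 p_0$ for $i=1,2$.

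For this triangle algebra I would realize representations as systems of subspaces $(\mathcal H;\mathcal H_0,\mathcal H_1,\mathcal H_2)$ in the sense outlined in the introduction: take $p_1,p_2$ as commuting projections giving the joint spectral decomposition $\mathcal H=\bigoplus_{\varepsilon}\mathcal H_\varepsilon$, and choose $p_0$ as the projection onto a subspace $V_t=\operatorname{Im}(p_0)$ varying over a continuous one-parameter family subject to the angle conditions dictated by the defining relations. I would then distinguish the resulting representations $\pi_t$ by identifying a continuously varying unitary invariant---for instance an eigenvalue of $\pi_t(p_0 p_1 p_2 p_0)$ on $\operatorname{Im}\pi_t(p_0)$, or the spectrum of the self-adjoint element $\pi_t(p_1 p_0 p_2+p_2 p_0 p_1)$---thereby establishing pairwise inequivalence. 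A continuous family of such irreducibles then forces infinite-dimensionality.

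The main obstacle lies in producing the modulus $V_t$: the angle equations are quite rigid, and a direct rank-two analysis on $\mathbb C^4$ (where the joint eigenspaces $\mathcal H_\varepsilon$ are one-dimensional) in fact admits solutions only for the exceptional value $\tau^2=1/2$. To obtain a genuine continuous family for arbitrary $\tau\in(0,1)$ one would have to enlarge $\mathcal H$---in effect passing to the direct-integral framework developed in Sections 3--4 of the paper---or else replace the representation-theoretic strategy with a more combinatorial one, exhibiting an explicit infinite sequence of linearly independent reduced words built from the idempotents $p_0 p_1 p_j p_0$ (for $j\ge 2$) together with their extensions, and verifying their independence in a single well-chosen faithful representation. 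Either route hinges on the same underlying geometric analysis of how three commuting/TL-constrained subspaces can be mutually positioned inside $\mathcal H$.
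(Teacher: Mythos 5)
Your proposal sets out to prove the wrong statement. The proposition in question is the first example of Section 2.1, which collects the \emph{finite-dimensional} cases (in contrast with Sections 2.2 and 2.3, where infinite-dimensionality with linear, respectively exponential, growth is established), and it asserts precisely that $TL_\tau(K_{n,1}^{(1,2),(1,3),\dots,(1,n)})$ is finite-dimensional; this is also what item (i) of the final theorem records. Your plan --- exhibit a continuous family of pairwise inequivalent irreducible $*$-representations and conclude infinite-dimensionality --- aims at the negation of the claim and therefore cannot be repaired within your strategy. The paper's argument is combinatorial: it writes down a finite confluent Gr\"obner--Shirshov basis for the ideal of relations, recalls that $TL_\tau(K_{n,1})$ is finite-dimensional with $\dim TL_\tau(K_{n,1})=n^2+1$, and then checks that the only new normal words of $TL_\tau(K_{n,1}^{(1,2),(1,3),\dots,(1,n)})$ are those containing the subwords $p_1p_k$, $k=2,\dots,n$, and that these contribute only finitely many normal words; hence the algebra is finite-dimensional.

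Even on its own terms the proposal fails at the reduction step. The ``triangle algebra'' $TL_\tau(K_{2,1}^{(1,2)})$, with generators $p_0,p_1,p_2$, relations $p_ip_0p_i=\tau^2 p_i$, $p_0p_ip_0=\tau^2p_0$ and $p_1p_2=p_2p_1$, is exactly the Temperley--Lieb algebra of the path graph on the vertices $1,0,2$ (type $A_3$), which is finite-dimensional for every $\tau\in(0,1)$; so no continuous family of pairwise inequivalent irreducible representations of it exists, and the inequality $\dim TL_\tau(K_{2,1}^{(1,2)})\le\dim TL_\tau(K_{n,1}^{(1,2),\dots,(1,n)})$ from Corollary 1.1 gives you nothing in the direction you need. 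This is in fact consistent with your own observation that the rank-two analysis admits solutions only at the exceptional value $\tau^2=1/2$: the rigidity you encountered is not a technical obstacle to be circumvented by direct integrals, it is a symptom of the fact that the algebra is small. The correct task here is the opposite of your fallback plan: one must bound the set of normal (reduced) words and show it is finite, not produce an infinite independent family of them.
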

\begin{proof}
Старшие слова базиса Гребнера алгебры $TL_\tau(K_{n,1}^{(1,2),(1,3),\dots,(1,n)})$ имеют вид $\bigcup_{i=1,\dots,n}\{p_i^2\}$, $\bigcup_{i,j=2,\dots,n}\{p_ip_j\}\setminus\{p_2p_3\}$,
$\bigcup_{i=1,j=2,\dots,n}\{p_ip_jp_i\}$, \\ $\bigcup_{j=1,i=2,\dots,n}\{p_ip_jp_i\}$, $\bigcup_{k=3}^n\{p_kp_1p_2p_k\}$,
$\bigcup_{k=3}^n\{p_2p_kp_1p_k\}$. Данные слова являются запрещенными в линейном базисе алгебры $TL_\tau(K_{n,1}^{(1,2),(1,3),\dots,(1,n)})$.

Известно, что алгебра $TL_\tau(K_{n,1})$ конечномерна и $\dim TL_\tau(K_{n,1})=n^2+1.$ Запрещенные слова алгебры $TL_\tau(K_{n,1})$ содержатся среди запрещенных слов алгебры $TL_\tau(K_{n,1}^{(1,2),(1,3),\dots,(1,n)})$, за исключением слов $p_1p_k$ при $k=2,\dots,n$. Тогда элементы $p_1p_k$ при $k=2,\dots,n$ входят в линейный базис алгебры $TL_\tau(K_{n,1}^{(1,2),(1,3),\dots,(1,n)})$. Покажем, что элементы $p_1p_k$ порождают конечное число новых базисных элементов.

Покажем, что элементы $p_1p_2p_0p_k$ и $p_kp_0p_1p_2$ при любом $k=3,\dots,n$ входят в линейный базис алгебры $TL_\tau(K_{n,1}^{(1,2)})$. Старшие слова базиса Гребнера алгебры $TL_\tau(K_{n,1}^{(1,2)})$ имеют вид $\bigcup_{i=1,\dots,n}\{p_i^2\}$, $\bigcup_{i,j=2,\dots,n}\{p_ip_j\}\setminus\{p_2p_3\}$,
$\bigcup_{i=0,j=1,\dots,n}\{p_ip_jp_i\}$, $\bigcup_{j=0,i=1,\dots,n}\{p_ip_jp_i\}$, $\{p_3p_0p_1p_2\}$,
$\{p_1p_2p_0p_1\}$. Данные слова являются запрещенными в линейном базисе алгебры $TL_\tau(K_{n,1}^{(1,2)})$. Так как ни одно из данных запрещенных слов не встречается в словах $p_1p_2p_0p_k$ и $p_kp_0p_1p_2$ при любом $k=3,\dots,n$, то эти слова являются элементами линейного базиса алгебры $TL_\tau(K_{n,1}^{(1,2)})$.

Среди элементов $p_1p_2p_k$ $(k=0,\dots,n)$ базисным является только элемент $p_1p_2p_0$, а остальные содержат
запрещенные слова. Рассмотрим теперь элементы вида \\ $p_1p_2p_0p_k$ $k=0,\dots,n$. Элементы
$p_1p_2p_0p_k$ при любом $k=3,\dots,n$ входят в линейный базис алгебры $TL_\tau(K_{n,1}^{(1,2)})$. Так как
слова $p_0p_kp_0$ и $p_kp_i$ при любых $k=3,\dots,n$ и $i=1,\dots,n$ являются запрещенными, то новых базисных
элементов не существует.

Среди элементов $p_kp_1p_2$ $(k=0,\dots,n)$ базисным является только элемент $p_0p_1p_2$, а остальные содержат
запрещенные слова. Рассмотрим теперь элементы вида \\ $p_kp_0p_1p_2$ $k=0,\dots,n$. Элементы
$p_kp_0p_1p_2$ при любом $k=3,\dots,n$ входят в линейный базис алгебры $TL_\tau(K_{n,1}^{(1,2)})$. Так как
слова $p_0p_kp_0$ и $p_ip_k$ при любых $k=3,\dots,n$ и $i=1,\dots,n$ являются запрещенными, то новых базисных
элементов не существует.

Аналогично доказывается, что элементы $p_1p_k$ при $k=3,\dots,n$ порождают конечное число базисных элементов алгебры $TL_\tau(K_{n,1}^{(1,2),(1,3),\dots,(1,n)})$. Следовательно, алгебра $TL_\tau(K_{n,1}^{(1,2),(1,3),\dots,(1,n)})$ конечномерна.
\end{proof}

\noindent {\bf 2.1.2} Рассмотрим двухцветный на ребрах граф $K_{3,1}^{(1,2),(1,3),(2,3)}$:

\vspace{0.5cm} \setlength{\unitlength}{1mm}
\begin{picture}(80,20)(-10,20)
\linethickness{1pt} \thinlines

\put(35,30){\line(1,0){15}}
\put(50,30){\line(-1,1){10}}
\put(50,30){\line(-1,-1){10}}

\put(35,30){\circle*{1}} 
\put(50,30){\circle*{1}} 
\put(40,40){\circle*{1}} 
\put(40,20){\circle*{1}} 

\put(35,30){\line(1,2){2}} 
\put(40,40){\line(-1,-2){2}}

\put(35,30){\line(1,-2){2}} 
\put(40,20){\line(-1,2){2}}

\put(40,20){\line(0,1){5}} 
\put(40,27){\line(0,1){5}}
\put(40,40){\line(0,-1){6}}

\put(35,27){\makebox(0,0)[a]{$_1$}}
\put(50,27){\makebox(0,0)[a]{$_0$}}
\put(37,40){\makebox(0,0)[a]{$_2$}}
\put(37,20){\makebox(0,0)[a]{$_3$}}
\end{picture}
\vspace{0,5cm}

\begin{proposition}
Алгебра $TL_\tau(K_{3,1}^{(1,2),(1,3),(2,3)})$ конечномерна.
\end{proposition}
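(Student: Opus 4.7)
First I note the structural feature that distinguishes this case from Proposition 2.1: because all three possible extra edges $(1,2)$, $(1,3)$, $(2,3)$ are present, the generators $p_1$, $p_2$, $p_3$ pairwise commute. Consequently the subalgebra $A$ they generate is a commutative algebra of commuting idempotents, spanned by the $8$ elements $e_S = \prod_{i \in S} p_i$ for $S \subseteq \{1, 2, 3\}$ (with $e_\emptyset = 1$).

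The plan is to exhibit a finite spanning set for $TL_\tau(K_{3,1}^{(1,2),(1,3),(2,3)})$ by normalizing every monomial in the generators to the form
\[ e_{S_0} \, p_0 \, e_{S_1} \, p_0 \cdots p_0 \, e_{S_k}, \qquad S_j \subseteq \{1, 2, 3\}. \]
Using $p_i^2 = p_i$, the commutativity of the $p_j$, and $p_0^2 = p_0$, every word reduces to such a form. Two further reductions are then available. Combining $p_i p_0 p_i = \tau^2 p_i$ with the commutativity of $p_1,p_2,p_3$ yields
\[ e_{S_j} \, p_0 \, e_{S_{j+1}} = \tau^2 \, e_{S_j \cup S_{j+1}} \quad \text{whenever } S_j \cap S_{j+1} \neq \emptyset, \]
(pick $i$ in the intersection, move it next to $p_0$ on each side, apply $p_ip_0p_i=\tau^2 p_i$), which removes one occurrence of $p_0$. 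The relation $p_0 p_i p_0 = \tau^2 p_0$ allows me to eliminate any interior factor $e_{S_j}$ with $|S_j| = 1$ along with one adjacent $p_0$. These two reductions I apply repeatedly until neither applies.

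In a fully reduced canonical form, consecutive $S_j$ are disjoint, and every interior $S_j$ (i.e.\ with $1 \leq j \leq k-1$) satisfies $|S_j| \geq 2$. This is the crux: if $k \geq 3$, then $S_1$ and $S_2$ would both be interior with $|S_1|, |S_2| \geq 2$ and $S_1 \cap S_2 = \emptyset$, which is impossible in the $3$-element set $\{1, 2, 3\}$. Hence $k \in \{0, 1, 2\}$ in every canonical form, and a direct enumeration bounds the number of distinct canonical monomials by $8 + 27 + 13 = 48$. Such a finite spanning set proves that $\dim TL_\tau(K_{3,1}^{(1,2),(1,3),(2,3)}) < \infty$.

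The main technical obstacle I anticipate is justifying rigorously that the two listed reductions (together with commutativity and idempotence) are enough --- i.e.\ that every element of the algebra really is a linear combination of reduced monomials. This is the confluence question for the rewriting system, which I would settle via Bergman's diamond lemma by verifying that all overlap ambiguities among the Temperley--Lieb relations $p_i p_0 p_i = \tau^2 p_i$ and $p_0 p_i p_0 = \tau^2 p_0$, together with the commutation relations $p_i p_j = p_j p_i$ for $i,j\in\{1,2,3\}$, resolve to the same normal form.
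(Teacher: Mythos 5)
Your argument is correct, but it goes a genuinely different way than the paper. The paper disposes of this case in one line: with all three extra edges present, $p_1,p_2,p_3$ pairwise commute, so $TL_\tau(K_{3,1}^{(1,2),(1,3),(2,3)})$ is exactly the Temperley--Lieb algebra of the graph $K_{3,1}$, i.e.\ of the Dynkin diagram $D_4$, and finite-dimensionality is quoted from known results on Temperley--Lieb algebras of $ADE$ graphs (\cite{graham}, \cite{zs}). You instead give a self-contained normal-form argument: every monomial is brought to the shape $e_{S_0}p_0e_{S_1}p_0\cdots p_0e_{S_k}$, the two rewrites $e_{S_j}p_0e_{S_{j+1}}=\tau^2 e_{S_j\cup S_{j+1}}$ (for $S_j\cap S_{j+1}\neq\emptyset$) and $p_0p_ip_0=\tau^2p_0$ (plus $p_0^2=p_0$ for empty interior blocks) force every interior block to be a set of size at least $2$ disjoint from its neighbours, and in a $3$-element ground set this caps $k$ at $2$, giving an explicit spanning set of at most $48$ monomials. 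That buys an explicit elementary proof with no external citation, at the cost of a counting argument the paper avoids. One remark: your closing worry about confluence and Bergman's diamond lemma is unnecessary for what is being claimed. Each rewrite is an identity in the algebra and each application strictly decreases the number of occurrences of $p_0$ (or shortens a block), so any reduction sequence terminates and exhibits the original monomial as a scalar multiple of a reduced one; confluence would only be needed if you wanted the reduced monomials to be linearly independent, i.e.\ to compute the exact dimension, which neither you nor the paper needs here.
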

\begin{proof}
Так как граф $K_{3,1}$ является подграфом графа Дынкина $D_4$, то алгебра $TL_{3,1}^{(1,2),(1,3)(2,3)}$ конечномерна (см., например, \cite{graham}, \cite{zs}).
\end{proof}

\noindent {\bf 2.2} Приведем примеры алгебр $TL_\tau(K_{n,1}^{(i_1,j_1),\dots,(i_k,j_k)})$ полиномиального роста.

\noindent {\bf 2.2.1} Рассмотрим двухцветный на ребрах граф $K_{4,1}^{(1,2),(3,4)}$:

\vspace{0.5cm} \setlength{\unitlength}{1mm}
\begin{picture}(80,20)(-10,20)
\linethickness{1pt} \thinlines

\put(50,30){\line(1,1){10}} 
\put(50,30){\line(-1,1){10}}

\put(50,30){\line(1,-1){10}} 
\put(50,30){\line(-1,-1){10}}

\put(50,30){\circle*{1}} 
\put(60,40){\circle*{1}} 
\put(40,40){\circle*{1}} 
\put(40,20){\circle*{1}} 
\put(60,20){\circle*{1}} 

\put(40,20){\line(0,1){5}} 
\put(40,27){\line(0,1){5}}
\put(40,40){\line(0,-1){6}}

\put(60,20){\line(0,1){5}} 
\put(60,27){\line(0,1){5}}
\put(60,40){\line(0,-1){6}}

\put(50,27){\makebox(0,0)[a]{$_0$}}
\put(63,40){\makebox(0,0)[a]{$_3$}}
\put(37,40){\makebox(0,0)[a]{$_2$}}
\put(63,20){\makebox(0,0)[a]{$_4$}}
\put(37,20){\makebox(0,0)[a]{$_1$}}
\end{picture}
\vspace{0,5cm}

\begin{proposition}
Алгебра $TL_\tau(K_{4,1}^{(1,2),(3,4)})$ бесконечномерна и имеет
линейный рост.
\end{proposition}
\begin{proof}
Положим $q_1=p_0p_1p_2$ и $q_2=p_0p_3p_4$. Для любого натурального числа $m$ элемент $(q_1q_2)^m$ принадлежит линейному базису алгебры $TL_\tau(K_{4,1}^{(1,2),(3,4)})$, так как не содержит ни одного запрещенного подслова. Тогда алгебра \\ $TL_\tau(K_{4,1}^{(1,2),(3,4)})$ является бесконечномерной. Алгебра $TL_\tau(K_{4,1}^{(1,2),(3,4)})$ имеет линейный рост, так как является фактор алгеброй алгебры $TL_\tau(K_{4,1}^{(1,2),(1,3),(1,4),(2,3),(3,4)})$, которая бесконечномерна и имеет линейный рост (утверждение 1.4).
\end{proof}

\noindent {\bf 2.2.2} Приведем еще ряд примеров алгебр полиномиального роста.

\begin{proposition}
Все алгебры $TL_\tau(K_{4,1}^{(i_1,j_1),\dots,(i_k,j_k)})$ бесконечномерны и имеют
линейный рост, кроме конечномерной алгебры $TL_\tau(K_{4,1}^{(i_1,i_2),(i_1,i_3),(i_1,i_4)})$.
\end{proposition}
\begin{proof}
Граф $K_{4,1}$ изоморфен графу $\widetilde D_4$, тогда алгебра \\ $TL_\tau(K_{4,1}^{(1,2),(1,3),(1,4),(2,3),(2,4),(3,4)})$ имеет линейный рост (см., например, \cite{graham}, \cite{zs}). По утверждению 2.1 алгебра $TL_\tau(K_{4,1}^{(i_1,i_2),(i_1,i_3),(i_1,i_4)})$ конечномерна, а по утверждению 2.3 алгебра $TL_\tau(K_{4,1}^{(1,2),(3,4)})$ бесконечномерна и имеет линейный рост. Но любой граф $K_{4,1}^{(i_1,j_1),\dots,(i_k,j_k)}$, отличный от $K_{4,1}^{(i_1,i_2),(i_1,i_3),(i_1,i_4)}$ обязательно содержит подграф изоморфный графу $K_{4,1}^{(1,2),(3,4)}$. Тогда в силу утверждения 1.4 алгебра \\ $TL_\tau(K_{4,1}^{(i_1,j_1),\dots,(i_k,j_k)})$, отличная от конечномерной алгебры $TL_\tau(K_{4,1}^{(i_1,i_2),(i_1,i_3),(i_1,i_4)})$, бесконечномерна и имеет линейный рост.
\end{proof}

\noindent {\bf 2.3} Приведем примеры алгебр $TL_\tau(K_{n,1}^{(i_1,j_1),\dots,(i_k,j_k)})$ экспоненциального роста.

\noindent {\bf 2.3.1} Рассмотрим двухцветный на ребрах граф $K_{5,1}^{(1,2),(1,4),(1,5),(2,3)}$:

\vspace{0.5cm} \setlength{\unitlength}{1mm}
\begin{picture}(80,20)(-10,20)
\linethickness{1pt} \thinlines

\put(35,30){\line(1,0){30}}
\put(50,30){\line(1,1){10}} 
\put(50,30){\line(-1,1){10}}

\put(50,30){\line(-1,-1){10}}

\put(35,30){\circle*{1}} 
\put(50,30){\circle*{1}} 
\put(65,30){\circle*{1}} 
\put(60,40){\circle*{1}} 
\put(40,40){\circle*{1}} 
\put(40,20){\circle*{1}} 

\put(35,30){\line(1,2){2}} 
\put(40,40){\line(-1,-2){2}}

\put(35,30){\line(1,-2){2}} 
\put(40,20){\line(-1,2){2}}

\put(35,30){\line(6,1){6}} 
\put(42,31){\line(1,0){4}}
\put(47.5,31){\line(1,0){5}}
\put(58,31){\line(-1,0){4}}
\put(65,30){\line(-6,1){6}}

\put(40,40){\line(1,0){5}} 
\put(47,40){\line(1,0){5}}
\put(54,40){\line(1,0){6}}

\put(35,27){\makebox(0,0)[a]{$_1$}}
\put(50,27){\makebox(0,0)[a]{$_0$}}
\put(65,27){\makebox(0,0)[a]{$_4$}}
\put(63,40){\makebox(0,0)[a]{$_3$}}
\put(37,40){\makebox(0,0)[a]{$_2$}}

\put(37,20){\makebox(0,0)[a]{$_5$}}
\end{picture}
\vspace{0,5cm}

\begin{proposition}
Алгебра $K_{5,1}^{(1,2),(1,4),(1,5),(2,3)}$ бесконечномерна и имеет
экспоненциальный рост.
\end{proposition}
\begin{proof}
Старшие слова базиса Гребнера алгебры $K_{5,1}^{(1,2),(1,4),(1,5),(2,3)}$ имеют вид $\bigcup_{i=0,\dots,n}\{p_i^2\}$,
$\bigcup_{i,j=1,\dots,n}\{p_ip_j\}\setminus\{\{p_1p_2\}\cup\{p_1p_4\}\cup\{p_1p_5\}\cup\{p_2p_3\}\}$,\\ 
$\bigcup_{i=0,j=1,\dots,n}\{p_ip_jp_i\}$,
$\bigcup_{j=0,i=0,\dots,n}\{p_ip_jp_i\}$, $\{p_3p_0p_1p_3\}$,
$\{p_5p_0p_4p_5\}$, $\{p_2p_0p_1p_2\}$, \\ $\{p_1p_3p_0p_1\}$,
$\{p_4p_5p_0p_2\}$, $\{p_3p_5p_0p_1p_4\}$. Данные слова являются запрещенными в линейном базисе алгебры $K_{5,1}^{(1,2),(1,4),(1,5),(2,3)}$.

Алгебра $K_{5,1}^{(1,2),(1,4),(1,5),(2,3)}$ имеет экспоненциальный
рост, так как её подалгебра, порожденная двумя образующими
$q_1=p_0p_1p_3p_0p_4p_5$ и $q_2=p_0p_5p_0p_1p_3p_0p_1p_5$
свободна (во всевозможных комбинациях элементов $q_1$ и $q_2$ не
содержится ни одного старшего подслова элементов базиса Гребнера
алгебры $K_{5,1}^{(1,2),(1,4),(1,5),(2,3)}$).
\end{proof}

\noindent {\bf 2.3.2} Рассмотрим двухцветный на ребрах граф $K_{5,1}^{(1,2),(2,3),(4,5)}$:

\vspace{0.5cm} \setlength{\unitlength}{1mm}
\begin{picture}(80,20)(-10,20)
\linethickness{1pt} \thinlines

\put(35,30){\line(1,0){30}}
\put(50,30){\line(1,1){10}} 
\put(50,30){\line(-1,1){10}}
\put(50,30){\line(1,-1){10}} 

\put(35,30){\circle*{1}} 
\put(50,30){\circle*{1}} 
\put(65,30){\circle*{1}} 
\put(60,40){\circle*{1}} 
\put(40,40){\circle*{1}} 
\put(60,20){\circle*{1}} 

\put(35,30){\line(1,2){2}} 
\put(40,40){\line(-1,-2){2}}

\put(40,40){\line(1,0){5}} 
\put(47,40){\line(1,0){5}}
\put(54,40){\line(1,0){6}}

\put(65,30){\line(-1,-2){2}} 
\put(60,20){\line(1,2){2}}

\put(35,27){\makebox(0,0)[a]{$_1$}}
\put(50,27){\makebox(0,0)[a]{$_0$}}
\put(65,27){\makebox(0,0)[a]{$_4$}}
\put(63,40){\makebox(0,0)[a]{$_3$}}
\put(37,40){\makebox(0,0)[a]{$_2$}}
\put(63,20){\makebox(0,0)[a]{$_5$}}
\end{picture}
\vspace{0,5cm}

\begin{proposition}
Алгебра $TL_\tau(K_{5,1}^{(1,2),(2,3),(4,5)})$ бесконечномерна и имеет
экспоненциальный рост.
\end{proposition}
\begin{proof}
Старшие слова базиса Гребнера алгебры $TL_\tau(K_{5,1}^{(1,2),(2,3),(4,5)})$ имеют вид $\bigcup_{i=0,\dots,n}\{p_i^2\}$, $\bigcup_{i,j=1,\dots,n}\{p_ip_j\}\setminus\{p_1p_2\}$,
$\bigcup_{i=0,j=1,\dots,n}\{p_ip_jp_i\}$, $\bigcup_{j=0,i=1,\dots,n}\{p_ip_jp_i\}$, $\{p_2p_0p_1p_2\}$,
$\{p_1p_2p_0p_1\}$. Данные слова являются запрещенными в линейном базисе алгебры $TL_\tau(K_{5,1}^{(1,2),(2,3),(4,5)})$.

Алгебра $TL_\tau(K_{5,1}^{(1,2),(2,3),(4,5)})$ имеет экспоненциальный
рост, так как её подалгебра, порожденная двумя образующими
$q_1=p_0p_1p_2p_0p_4p_5$ и $q_2=p_0p_2p_3p_0p_4p_5$ свободна (во
всевозможных комбинациях элементов $q_1$ и $q_2$ не содержится ни
одного старшего подслова элементов базиса Гребнера алгебры
$TL_\tau(K_{5,1}^{(1,2),(2,3),(4,5)})$).
\end{proof}

\noindent {\bf 2.3.3} Рассмотрим двухцветный на ребрах граф $K_{6,1}^{(1,6),(2,3),(4,5)}$:

\vspace{0.5cm} \setlength{\unitlength}{1mm}
\begin{picture}(80,20)(-10,20)
\linethickness{1pt} \thinlines

\put(35,30){\line(1,0){30}}
\put(50,30){\line(1,1){10}} 
\put(50,30){\line(-1,1){10}}

\put(50,30){\line(1,-1){10}} 
\put(50,30){\line(-1,-1){10}}

\put(35,30){\circle*{1}} 
\put(50,30){\circle*{1}} 
\put(65,30){\circle*{1}} 
\put(60,40){\circle*{1}} 
\put(40,40){\circle*{1}} 
\put(40,20){\circle*{1}} 
\put(60,20){\circle*{1}} 

\put(40,40){\line(1,0){5}} 
\put(47,40){\line(1,0){5}}
\put(54,40){\line(1,0){6}}

\put(35,30){\line(1,-2){2}} 
\put(40,20){\line(-1,2){2}}

\put(65,30){\line(-1,-2){2}} 
\put(60,20){\line(1,2){2}}

\put(35,27){\makebox(0,0)[a]{$_1$}}
\put(50,27){\makebox(0,0)[a]{$_0$}}
\put(65,27){\makebox(0,0)[a]{$_4$}}
\put(63,40){\makebox(0,0)[a]{$_3$}}
\put(37,40){\makebox(0,0)[a]{$_2$}}
\put(63,20){\makebox(0,0)[a]{$_5$}}
\put(37,20){\makebox(0,0)[a]{$_6$}}
\end{picture}
\vspace{0,5cm}

\begin{proposition}
Алгебра $TL_\tau(K_{6,1}^{(1,6),(2,3),(4,5)})$ бесконечномерна и имеет
экспоненциальный рост.
\end{proposition}
\begin{proof}
Старшие слова базиса Гребнера алгебры $TL_\tau(K_{6,1}^{(1,6),(2,3),(4,5)})$ имеют вид $\bigcup_{i=0,\dots,n}\{p_i^2\}$,
$\bigcup_{i,j=1,\dots,n}\{p_ip_j\}\setminus\{\{p_1p_6\}\cup\{p_2p_3\}\cup\{p_4p_5\}\}$,
$\bigcup_{i=0,j=1,\dots,n}\{p_ip_jp_i\}$,
$\bigcup_{j=0,i=0,\dots,n}\{p_ip_jp_i\}$, $\{p_3p_0p_1p_3\}$,
$\{p_5p_0p_4p_5\}$, $\{p_2p_0p_6p_2\}$, $\{p_1p_3p_0p_1\}$,
$\{p_4p_5p_0p_2\}$, \\ $\{p_6p_2p_0p_6\}$. Данные слова являются запрещенными в линейном базисе алгебры \\ $TL_\tau(K_{6,1}^{(1,6),(2,3),(4,5)})$.

Алгебра $TL_\tau(K_{6,1}^{(1,6),(2,3),(4,5)})$ имеет экспоненциальный
рост, так как её подалгебра, порожденная двумя образующими
$q_1=p_0p_1p_3p_0p_4p_5$ и $q_2=p_0p_6p_2p_0p_1p_3p_0p_4p_5$
свободна (во всевозможных комбинациях элементов $q_1$ и $q_2$ не
содержится ни одного старшего подслова элементов базиса Гребнера
алгебры $TL_\tau(K_{6,1}^{(1,6),(2,3),(4,5)})$).
\end{proof}

\section{Рост алгебры $TL_\tau(K_{n,1}^{(i_1,j_1),\dots,(i_k,j_k)})$ и число \\ $\nu(K_{n,1}^{(i_1,j_1),\dots,(i_k,j_k)})$}

\noindent {\bf 3.1} Имеет место следующее утверждение.
\begin{proposition}
Если алгебра $TL_\tau(K_{n,1}^{(i_1,j_1),\dots,(i_k,j_k)})$ конечномерна, то \\ $\nu(K_{n,1}^{(i_1,j_1),\dots,(i_k,j_k)})=1$.
\end{proposition}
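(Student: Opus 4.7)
The plan is to argue by contrapositive: I assume $\nu(K_{n,1}^{(i_1,j_1),\dots,(i_k,j_k)}) \geq 2$ and deduce that the algebra is infinite-dimensional, contradicting the hypothesis.

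When $\nu \geq 2$, there are at least two distinct non-trivial connected components of the dashed-edge subgraph on $\{1,\dots,n\}$; call them $\Gamma_1$ and $\Gamma_2$. I pick a dashed edge $(a_1,a_2)$ lying inside $\Gamma_1$ and a dashed edge $(b_1,b_2)$ lying inside $\Gamma_2$. Since the two components are disjoint, no dashed edge of $K_{n,1}^{(i_1,j_1),\dots,(i_k,j_k)}$ joins $\{a_1,a_2\}$ to $\{b_1,b_2\}$.

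The induced substructure on the vertex set $\{0,a_1,a_2,b_1,b_2\}$, consisting of the four solid edges to $0$ and exactly the two dashed edges $(a_1,a_2)$ and $(b_1,b_2)$, then satisfies the subgraph definition of Section 1.1 and, after relabelling of vertices, coincides with $K_{4,1}^{(1,2),(3,4)}$. Now I chain together two already-proved facts: the proposition in Section 2.2.1 says that $TL_\tau(K_{4,1}^{(1,2),(3,4)})$ is infinite-dimensional, and Corollary 1.1 (following Proposition 1.3) says that the basis of the algebra of any subgraph does not exceed the basis of the algebra of the ambient graph. Composing these, $TL_\tau(K_{n,1}^{(i_1,j_1),\dots,(i_k,j_k)})$ is forced to be infinite-dimensional, contradicting the hypothesis; hence $\nu = 1$.

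The only subtle point, and thus the main obstacle, lies in the very first step: one must correctly read the condition ``$\nu \geq 2$'' as furnishing two distinct dashed components, each containing at least one dashed edge. The definition and examples in Section 1.1 (in particular, $\nu = 1$ for $K_{6,1}^{(1,2),(1,3),(1,5),(1,6),(2,6)}$ despite the isolated vertex $4$) make clear that isolated vertices are not counted towards $\nu$, so $\nu \geq 2$ really does provide two disjoint dashed edges in different components -- which is exactly what the reduction to the ``bad'' graph $K_{4,1}^{(1,2),(3,4)}$ requires. Once this is in place, no computation is left: the conclusion follows by simply composing Corollary 1.1 with the previously established infinite-dimensionality result.
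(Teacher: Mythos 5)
Your proposal is correct and follows essentially the same route as the paper: assume $\nu\geq 2$, extract the subgraph $K_{4,1}^{(1,2),(3,4)}$ from two dashed edges in distinct components, and combine the infinite-dimensionality of $TL_\tau(K_{4,1}^{(1,2),(3,4)})$ (Section 2.2.1) with the subgraph monotonicity of Section 1 to get a contradiction. The paper's proof is just a terser statement of exactly this argument, so your added detail about why the subgraph exists (isolated vertices not counting towards $\nu$) is a faithful elaboration rather than a different method.
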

\begin{proof}
Предположим, что $\nu(K_{n,1}^{(i_1,j_1),\dots,(i_k,j_k)})>1$, тогда граф $K_{n,1}^{(i_1,j_1),\dots,(i_k,j_k)}$ обязательно содержит подграф изоморфный графу $K_{4,1}^{(1,2),(3,4)}$. Так как алгебра $TL_\tau(K_{4,1}^{(1,2),(3,4)})$ бесконечномерна, то $\nu(K_{n,1}^{(i_1,j_1),\dots,(i_k,j_k)})=1$.
\end{proof}

\begin{remark}
Обратное утверждение не имеет места. Если \\ $\nu(K_{n,1}^{(i_1,j_1),\dots,(i_k,j_k)})=1$, то в зависимости от графа $K_{n,1}^{(i_1,j_1),\dots,(i_k,j_k)}$ алгебра \\ $TL_\tau(K_{n,1}^{(i_1,j_1),\dots,(i_k,j_k)})$ может быть как конечномерной, так и бесконечномерной полиномиального или экспоненциального роста (см. примеры в п.2).
\end{remark}

\noindent {\bf 3.2} Имеет место следующее утверждение.
\begin{proposition}
Если алгебра $TL_\tau(K_{n,1}^{(i_1,j_1),\dots,(i_k,j_k)})$ бесконечномерна и имеет полиномиальный рост, то $\nu(K_{n,1}^{(i_1,j_1),\dots,(i_k,j_k)})\leq 2$.
\end{proposition}
\begin{proof}
Предположим, что $\nu(K_{n,1}^{(i_1,j_1),\dots,(i_k,j_k)})>2$, тогда граф $K_{n,1}^{(i_1,j_1),\dots,(i_k,j_k)}$ обязательно содержит подграф изоморфный графу $K_{6,1}^{(1,6),(2,3),(4,5)}$. Так как алгебра $TL_\tau(K_{6,1}^{(1,6),(2,3),(4,5)})$ бесконечномерна и имеет экспоненциальный рост, то \\ $\nu(K_{n,1}^{(i_1,j_1),\dots,(i_k,j_k)})\leq 2$.
\end{proof}

\begin{remark}
Обратное утверждение не имеет места. Если \\ $\nu(K_{n,1}^{(i_1,j_1),\dots,(i_k,j_k)})=2$, то в зависимости от графа $K_{n,1}^{(i_1,j_1),\dots,(i_k,j_k)}$ алгебра \\ $TL_\tau(K_{n,1}^{(i_1,j_1),\dots,(i_k,j_k)})$ может быть только бесконечномерной полиномиального или экспоненциального роста (см. примеры в п.2).
\end{remark}

\noindent {\bf 3.3} Имеет место следующее утверждение.
\begin{proposition}
Если $\nu(K_{n,1}^{(i_1,j_1),\dots,(i_k,j_k)})\geq 3$, то алгебра $TL_\tau(K_{n,1}^{(i_1,j_1),\dots,(i_k,j_k)})$ всегда является бесконечномерной экспоненциального роста.
\end{proposition}
\begin{proof}
Если $\nu(K_{n,1}^{(i_1,j_1),\dots,(i_k,j_k)})\geq 3$, то она обязательно содержит подграф изоморфный графу $K_{6,1}^{(1,6),(2,3),(4,5)}$, для которого соответствующая алгебра бесконечномерна и имеет экспоненциальный рост.
\end{proof}

\begin{remark}
Обратное утверждение не имеет места: у бесконечномерных алгебр $TL_\tau(K_{n,1}^{(i_1,j_1),\dots,(i_k,j_k)})$ экспоненциального роста $\nu(K_{n,1}^{(i_1,j_1),\dots,(i_k,j_k)})$ может быть равно 1 или 2 (см. примеры в п.2).
\end{remark}

\section{Основная теорема}
Напомним, что в п.2-4 статьи предполагается, что каждая вершина $i\in\{1,\dots,n\}$ графа $K_{n,1}^{(i_1,j_1),\dots,(i_k,j_k)}$ соединена хотя бы с одной другой вершиной пунктирными ребрами (см. замечание 1.1).

Перейдем к основной теореме работы.

\noindent {\bf Теорема} Рассмотрим алгебру $TL_\tau(K_{n,1}^{(i_1,j_1),\dots,(i_k,j_k)})$
\begin{itemize}
\item[(i)] лгебра $TL_\tau(K_{n,1}^{(i_1,j_1),\dots,(i_k,j_k)})$ конечномерна тогда и только тогда, когда граф $K_{n,1}^{(i_1,j_1),\dots,(i_k,j_k)}$ изоморфен графу $K_{n,1}^{(1,2),(1,3),\dots,(1,n)}$ или $K_{3,1}^{(1,2),(1,3),(2,3)}$;

\item[(ii)] алгебра $TL_\tau(K_{n,1}^{(i_1,j_1),\dots,(i_k,j_k)})$ бесконечномерна и имеет линейный рост тогда и только тогда, когда $n=4$ и граф $K_{4,1}^{(i_1,j_1),\dots,(i_k,j_k)}$ не изоморфен графу $K_{4,1}^{(1,2),(1,3),(1,4)}$, для которго соответствующая алгебра конечномерна;

\item[(iii)] для всех графов, отличных от графов п. (i) и (ii), алгебра $TL_\tau(K_{n,1}^{(i_1,j_1),\dots,(i_k,j_k)})$ бесконечномерна и имеет экспоненциальный рост; любой такой граф \\ $K_{n,1}^{(i_1,j_1),\dots,(i_k,j_k)}$ содержит подграф $K_{5,1}^{(1,2),(2,3),(4,5)}$, $K_{6,1}^{(1,6),(2,3),(4,5)}$ или \\ $K_{5,1}^{(1,2),(1,4),(1,5),(2,3)}$.
\end{itemize}

\begin{proof}
(i) В пункте 2.1 было доказано, что если граф $K_{n,1}^{(i_1,j_1),\dots,(i_k,j_k)}$ изоморфен графу $K_{n,1}^{(1,2),(1,3),\dots,(1,n)}$ или $K_{3,1}^{(1,2),(1,3),(2,3)}$, то алгебра $TL_\tau(K_{n,1}^{(i_1,j_1),\dots,(i_k,j_k)})$ конечномерна. 

Обратно, предположим, что алгебра $TL_\tau(K_{n,1}^{(i_1,j_1),\dots,(i_k,j_k)})$ конечномерна, тогда по утверждению 3.1 $\nu(K_{n,1}^{(i_1,j_1),\dots,(i_k,j_k)})=1$. 

При $n=3$ алгебра $TL_\tau(K_{3,1}^{(1,2),(1,3),(2,3)})$ конечномерна (см., например, \cite{graham}) и любой подграф графа $K_{3,1}^{(1,2),(1,3),(2,3)}$, в котором каждая из вершин $\{1,2,3\}$ участвует в соотношениях коммутации, изоморфен графу $K_{3,1}^{(1,2),(1,3)}$ или $K_{3,1}^{(1,2),(1,3),(2,3)}$. 

Пусть $n>3$ и граф $K_{n,1}^{(i_1,j_1),\dots,(i_k,j_k)}$ содержит более одной вершины, из которой выходят два или больше пунктирных ребра. Тогда граф $K_{n,1}^{(i_1,j_1),\dots,(i_k,j_k)}$ обязательно содержит подграф изоморфный графу  $K_{n,1}^{(1,2),(3,4)}$, для которого соответствующая алгебра бесконечномерна и имеет линейный рост. Следовательно, в графе $K_{n,1}^{(i_1,j_1),\dots,(i_k,j_k)}$ не существует более одной вершины, из которой выходит несколько ребер. Единственный граф, удовлетворяющий этому свойству, является граф изоморфный графу $K_{n,1}^{(1,2),(1,3),\dots,(1,n)}$.

(ii) В пункте 2.2 было доказано, что все алгебры $TL_\tau(K_{4,1}^{(i_1,j_1),\dots,(i_k,j_k)})$ бесконечномерны и имеют
линейный рост, кроме конечномерной алгебры \\ $TL_\tau(K_{4,1}^{(i_1,i_2),(i_1,i_3),(i_1,i_4)})$.

Обратно, предположим, что алгебра $TL_\tau(K_{n,1}^{(i_1,j_1),\dots,(i_k,j_k)})$ бесконечномерна и имеет полиномиальный рост. По утверждению 3.2 $\nu(K_{n,1}^{(i_1,j_1),\dots,(i_{k_1},j_{k_1})})\leq 2$. 

Если $\nu(K_{n,1}^{(i_1,j_1),\dots,(i_{k_1},j_{k_1})})= 2$, то при $n\geq 5$ не существует графа $K_{n,1}^{(i_1,j_1),\dots,(i_{k_1},j_{k_1})}$ с двумя компонентами связности, который бы не содержал в качестве подграфа граф $K_{5,1}^{(1,2),(2,3),(4,5)}$, для которого соответствующая алгебра $TL_\tau(K_{5,1}^{(1,2),(2,3),(4,5)})$ бесконечномерна и имеет экспоненциальный рост. 

Пусть $\nu(K_{n,1}^{(i_1,j_1),\dots,(i_{k_1},j_{k_1})})= 1$ и $n=5$, тогда граф $K_{n,1}^{(i_1,j_1),\dots,(i_k,j_k)}$ содержит не менее четырех пунктирных ребер и, следовательно, содержит в качестве подграфа один из графов изоморфных $K_{5,1}^{(1,2),(1,3),(1,4),(1,5)}$, $K_{5,1}^{(1,2),(2,3),(3,4),(4,5)}$ или \\ $K_{5,1}^{(1,2),(1,4),(1,5),(2,3)}$. Алгебры $TL_\tau(K_{5,1}^{(1,2),(2,3),(3,4),(4,5)})$ и $TL_\tau(K_{5,1}^{(1,2),(1,4),(1,5),(2,3)})$ бесконечномерны и имеют экспоненциальный рост. Алгебра $TL_\tau(K_{5,1}^{(1,2),(1,3),(1,4),(1,5)})$ конечномерна и добавление пунктирного ребра $(i_0,j_0)$ в любом месте приведет к тому, что граф $K_{5,1}^{(1,2),(1,3),(1,4),(1,5),(i_0,j_0)}$ будет содержать подграф изоморфный \\ $K_{5,1}^{(1,2),(1,4),(1,5),(2,3)}$, для которого сооответствующая алгебра бесконечномерна и имеет экспоненциальный рост.

Если $n>5$, то граф $K_{n,1}^{(i_1,j_1),\dots,(i_k,j_k)}$ также будет содержать в качестве подграфа один из графов изоморфных $K_{n,1}^{(1,2),(1,3),\dots,(1,n)}$, $K_{5,1}^{(1,2),(2,3),(3,4),(4,5)}$ или $K_{5,1}^{(1,2),(1,4),(1,5),(2,3)}$. Алгебра $TL_\tau(K_{n,1}^{(1,2),(1,3),\dots,(1,n)})$ конечномерна и добавление пунктирного ребра $(i_0,j_0)$ в любом месте приведет к тому, что граф $K_{n,1}^{(1,2),(1,3),\dots,(1,n),(i_0,j_0)}$ будет содержать подграф изоморфный $K_{5,1}^{(1,2),(1,4),(1,5),(2,3)}$, для которого сооответствующая алгебра бесконечномерна и имеет экспоненциальный рост. Следовательно, при \\ $\nu(K_{n,1}^{(i_1,j_1),\dots,(i_{k_1},j_{k_1})})= 1$ и $n\geq 4$ алгебра $TL_\tau(K_{n,1}^{(i_1,j_1),\dots,(i_k,j_k)})$ не может иметь полиномиальный рост. Тогда и граф $K_{n,1}^{(i_1,j_1),\dots,(i_k,j_k)}$ изоморфен одному из графов $K_{4,1}^{(i_1,j_1),\dots,(i_k,j_k)}$ за исключением графа $K_{4,1}^{(1,2),(1,3),(1,4)}$, для которого соответствующая алгебра конечномерна.

(iii) Если алгебра $TL_\tau(K_{n,1}^{(i_1,j_1),\dots,(i_k,j_k)})$ отлична от графов из пунктов (i) и (ii), то она содержит один из подграфов изоморфных $K_{5,1}^{(1,2),(2,3),(4,5)}$, $K_{6,1}^{(1,6),(2,3),(4,5)}$ или $K_{5,1}^{(1,2),(1,4),(1,5),(2,3)}$, следовательно, она бесконечномерна и имеет экспоненциальный рост.

По утверждению 3.3 при $\nu(K_{n,1}^{(i_1,j_1),\dots,(i_{k_1},j_{k_1})})\geq 3$ алгебра $TL_\tau(K_{n,1}^{(i_1,j_1),\dots,(i_k,j_k)})$ всегда бесконечномерна и имеет экспоненциальный рост. Если $\nu(K_{n,1}^{(i_1,j_1),\dots,(i_{k_1},j_{k_1})})= 2$ и $n\geq 5$, то граф $K_{n,1}^{(i_1,j_1),\dots,(i_k,j_k)}$ обязательно содержит подграф изоморфный $K_{5,1}^{(1,2),(2,3),(4,5)}$, для которого соответсвующая алгебра бесконечномерна и имеет экспоненциальный рост. Если $\nu(K_{n,1}^{(i_1,j_1),\dots,(i_{k_1},j_{k_1})})=1$, то $n\geq 5$. При $n\geq 5$ и \\ $\nu(K_{n,1}^{(i_1,j_1),\dots,(i_{k_1},j_{k_1})})= 1$ граф $K_{n,1}^{(i_1,j_1),\dots,(i_k,j_k)}$ содержит подграф изоморфный \\ $K_{5,1}^{(1,2),(2,3),(4,5)}$ или $K_{5,1}^{(1,2),(1,4),(1,5),(2,3)}$.

В пункте 2.3 было доказано, что если граф $K_{n,1}^{(i_1,j_1),\dots,(i_k,j_k)}$ изоморфен графу $K_{5,1}^{(1,2),(2,3),(4,5)}$, $K_{6,1}^{(1,6),(2,3),(4,5)}$ или $K_{5,1}^{(1,2),(1,4),(1,5),(2,3)}$, то алгебра $TL_\tau(K_{n,1}^{(i_1,j_1),\dots,(i_k,j_k)})$ бесконечномерна и имеет экспоненциальный рост. 
\end{proof}


\begin{thebibliography}{99}

\bibitem{gab} P.Gabriel {\it Unzerlegbare Darstellungen I}~// Manuscripta Math. 6 (1972), p.71--103.

\bibitem{graham} J.J.Graham {\it Modular representations of Hecke algebras and related algebras}~// Ph.D. thesis, University of Sydney, 1995.

\bibitem{ber} И.Н.Бернштейн, И.М.Гельфанд, В.А.Пономарев {\it Функторы Кокстера и теорема Габриеля}~// Успехи матем. наук, т.28, вып.2 (1973), с.19--33.

\bibitem{ufn} В.А.Уфнаровский {\it Комбинаторные и ассимптотические методы в алгебре}~// Современные проблемы математики. Фундаментальные направления, --~т.57, --~с.5--177 (1990).

\bibitem{vms} М.А.Власенко, А.С.Меллит, Ю.С.Самойленко {\it Об алгебрах, порожденных линейно связанными образующими с заданными спектрами}~// Функ. Анал. Прилож., т. 39, 2005, вып. 3, с. 14-27. 

\bibitem{ss} Ю.С.Самойленко, А.В.Стрелец {\it О простых n-ках подпространств гильбертова пространства}~// Укр. мат. журн., --~т.61, (2009), №12,--~с.1668--1703.

\bibitem{zs} М.В.Заводовский, Ю.С.Самойленко {\it Рост обобщенных алгебр Темперли-Либа, связанных с простыми графами}~//
Укр. Мат. Журн., т. 61, № 11, с. 1579-1584, 2009.

\end{thebibliography}
\end{document}